\documentclass[final,12pt,reqno]{amsart}

\usepackage{amssymb,amsmath,graphicx,amsfonts,euscript}
\usepackage{color}
\usepackage{graphicx}
\usepackage{color}
\usepackage{showkeys}
\usepackage{epstopdf}

\newtheorem{thm}{Theorem}[section]

\newtheorem{rem}[thm]{Remark}

\newtheorem{lemma}[thm]{Lemma}

\newcommand{\be}{\begin{eqnarray}}
\newcommand{\ee}{\end{eqnarray}}
\newcommand{\bes}{\begin{eqnarray*}}
\newcommand{\ees}{\end{eqnarray*}}

\voffset=-0.2in \numberwithin{equation}{section}
\subjclass[2010]{35Q35, 35B40, 76D03} \keywords{Magneto-micropolar equations, stability, partial viscosity, decay estimate}
\begin{document}
\title[Magneto-micropolar equations]{The stabilizing effect of the microstructure on the 3D magneto-micropolar equations}

\author[Shang]{Haifeng Shang}

\address{School of Mathematics and Statistics, Northeastern University at Qinhuangdao, Qinhuangdao, 066004, China}

\email{hfshang@163.com}

\vskip .2in
\begin{abstract}

This paper focuses on the global stability of the 3D magneto-micropolar equations with partial viscosity in the torus $\mathbb T^3$. We first establish the global stability and exponential decay for the 3D magneto-micropolar equations with zero kinematic viscosity. If the micro-rotation effect is neglected, this system reduces to the 3D inviscid and resistive MHD equations which stability problem is still a challenging open problem. Secondly, we obtain the global stability and algebraic decay to the 3D magneto-micropolar equations with zero kinematic viscosity and zero magnetic diffusion on perturbations near a background magnetic field. This system becomes the 3D ideal MHD equations by ignoring the microstructure, and it is well-known that the weighted spaces must be introduced to show the global well-posedness of the ideal MHD equations. Our results indicate that the microstructure has the effect of enhancing dissipation and contributes to stabilize the fluid. To the best of our knowledge, these are the first results on the stabilizing effect of the microstructure on electrically conducting fluids.

\end{abstract}

\maketitle

\vskip .3in
\section{Introduction}
\vskip .1in

In this paper, we study the three dimensional (3D) incompressible magneto-micropolar equations in the periodic domain $\mathbb T^3$:
\begin{align}\label{MME}
\left\{ \begin{array}{lcl}
\partial_t u + u \cdot \nabla u - (\mu + \chi) \Delta u  = - \nabla p + 2\chi \nabla\times \omega + b\cdot \nabla b,\\
\partial_t \omega + u \cdot \nabla \omega + 4\chi \omega - \kappa \nabla\nabla\cdot \omega - \eta \Delta \omega = 2\chi \nabla\times u,\\
\partial_t b + u \cdot \nabla b - \nu \Delta b  =  b\cdot \nabla u,\\
\nabla \cdot u=0, \nabla \cdot b=0,\\
u(x, 0) = u_{0}(x), \omega(x, 0) = \omega_{0}(x), b(x, 0) = b_{0}(x),
\end{array} \right.
\end{align}
where $u$, $\omega$, $b$ and $p$ denote the fluid velocity, the micro-rotational velocity, the magnetic field and the pressure, respectively. The nonnegative constants $\mu$, $\chi$ and $\nu$ are the kinematic viscosity, the micro-rotation viscosity and the magnetic diffusivity, respectively, and $\kappa$ and $\eta$ are the angular viscosities.

The magneto-micropolar equations \eqref{MME} can describe fluids with microstructure (see \cite{AS74,Eringen66,Lukaszewicz99}) which cannot be represented by the classical fluids such as the Navier-Stokes equations and the magnetohydrodynamic (MHD) equations. It is worth noting that four new viscosities $\chi$, which size allows us to measure in a certain sense that the derivation of flows of micropolar fluids from that of the Navier-Stokes model or MHD model, are introduced in this model. If the micro-rotation viscosity $\chi = 0$, the conservation law of the linear momentum becomes independent of the presence of the microstructure and system \eqref{MME} reduces to the MHD equations.


When $\mu>0$, $\eta>0$ and $\nu>0$, the global existence of weak solutions in bounded domain and whole space has been shown in \cite{LS18,RB98,TWZ19}. It is well-known that the global regularity of the weak solutions and the global existence of the strong solution are classical for 2D case while the corresponding problems in 3D case are still challenging open problems.

When $\mu>0$, $\eta=0$ and $\nu>0$, the stress momentum is lost in rotation of the particles, the microstructure plays an important role as it usually increases the load capacity and stabilizes the flows (see \cite{Ferrari83,SD85}). Due to the lack of micro-rotational velocity dissipation, the global well-posedness and large time behavior issues of this kind of magneto-micropolar fluid are more difficult than \eqref{MME} with full dissipation. In the 2D space, Yamazaki \cite{Yamazaki15} has previously established the global regularity by fully exploiting the structure of the system and bounding the Lebesgue norm of the first derivatives of the solution (see also \cite{SW21}). By utilizing the special structure of this system and making full use of the velocity dissipation and the damping term in equation of $\omega$, Niu and Shang \cite{NS24} obtained the existence of small data global classical solutions and the lower and upper bounds of decay estimates of the global solutions for the 2D and 3D cases. For other exciting results, one can refer to \cite{BCFZ19,CM12,DLW17,DZ10,GMNPZ25,SL25,SS26,SZ17,WZ23,Xue11,YQ18}.


When $\mu=0$, $\eta>0$ and $\nu>0$, system \eqref{MME} reduces to the following system
\begin{align}\label{MMEZ}
\left\{ \begin{array}{lcl}
\partial_t u + u \cdot \nabla u - \chi \Delta u  = - \nabla p + 2\chi \nabla\times \omega + b\cdot \nabla b,\\
\partial_t \omega + u \cdot \nabla \omega + 4\chi \omega - \kappa \nabla\nabla\cdot \omega - \eta \Delta \omega = 2\chi \nabla\times u,\\
\partial_t b + u \cdot \nabla b - \nu \Delta b  =  b\cdot \nabla u,\\
\nabla \cdot u=0, \nabla \cdot b=0,\\
u(x, 0) = u_{0}(x), \omega(x, 0) = \omega_{0}(x), b(x, 0) = b_{0}(x).
\end{array} \right.
\end{align}
This paper first aims at the global stability problem of system \eqref{MMEZ}. To shed some light on the potential difficulties of this problem, we briefly review several facts on the inviscid and resistive MHD equations. Particularly, when we neglect the effect of the angular velocity of rotation of particles of the fluid, namely $\chi=0$, then \eqref{MMEZ} reduces to
\begin{align}\label{MHDZ}
\left\{ \begin{array}{lcl}
\partial_t u + u \cdot \nabla u  = - \nabla p + b\cdot \nabla b,\\
\partial_t b + u \cdot \nabla b - \nu \Delta b  =  b\cdot \nabla u,\\
\nabla \cdot u=0, \nabla \cdot b=0.
\end{array} \right.
\end{align}
The global well-posedness of solutions of system \eqref{MHDZ} even for small initial data are still left open. Recently, Chen, Zhang and Zhou \cite{CZZ22} first proved the global stability for \eqref{MHDZ} in the torus $\mathbb T^3$ when the magnetic field is close to a constant equilibrium state satisfying the Diophantine condition. Very recently, Xie, Jiu and Liu \cite{XJL24} further studied this system with background magnetic field satisfying the Diophantine condition and reduced the regularity requirement on the initial data of \cite{CZZ22}. For more results on the global stability problem of MHD equations with partial dissipation, one can refer to \cite{AZ17,CW11,LZ09,LXZ15,LZ14,PZZ18,RWXZ14,WZ20,YY22,Zhai23,Zhang16,ZZ23,ZZ18} and the references therein.



The first goal of this paper is to study the global stability problem of the magneto-micropolar equations with zero kinematic viscosity \eqref{MMEZ}. Taking advantage of the key observation of regarding the both linear terms $2\chi \nabla\times \omega$ and $2\chi \nabla\times u$ as the perturbations, we can overcome the difficulties of the lack of dissipation and establish the global stability of system \eqref{MMEZ}. More precisely, we have the following theorem.


\begin{thm}\label{main1}
Consider the system \eqref{MMEZ} with $\chi>0$, $\kappa\geq0$, $\eta>0$ and $\nu>0$. Suppose that $(u_0, \omega_0, b_0) \in H^3(\mathbb T^3)$ with $\nabla\cdot u_0=\nabla\cdot b_0 = 0$ and $\int_{\mathbb T^3} u_0 dx=\int_{\mathbb T^3} \omega_0 dx=\int_{\mathbb T^3} b_0 dx=0$. Then there exists a constant $\epsilon > 0$ such that, if
\begin{equation}\label{sma}
\|u_{0}\|_{H^3(\mathbb T^3)} + \|\omega_{0}\|_{H^3(\mathbb T^3)} + \|b_{0}\|_{H^3(\mathbb T^3)}\leq \epsilon,
\end{equation}
then system \eqref{MMEZ} has a unique global strong solution $(u, \omega, b)$ satisfying, for any $ t > 0$ and some positive constants $C$ and $C_0$,
\begin{equation}\label{m1}
\|u(t)\|_{H^3(\mathbb T^3)} + \|\omega(t)\|_{H^3(\mathbb T^3)} + \|b(t)\|_{H^3(\mathbb T^3)}\leq C\epsilon,
\end{equation}
\begin{equation}\label{m2}
\|u(t)\|_{H^{3}(\mathbb T^3)} + \|\omega(t)\|_{H^{3}(\mathbb T^3)} + \|b(t)\|_{H^{3}(\mathbb T^3)} \leq C e^{-C_0 t}.
\end{equation}

\end{thm}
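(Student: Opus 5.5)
We prove Theorem \ref{main1} by a bootstrap (continuity) argument built on an $H^3$ energy estimate. Local well-posedness in $H^3$ is standard, so everything reduces to an a priori estimate for
\[
E(t)=\|u(t)\|_{H^3}^2+\|\omega(t)\|_{H^3}^2+\|b(t)\|_{H^3}^2 .
\]
The key observation is that, although $\mu=0$, the momentum equation still carries the dissipation $-\chi\Delta u$. The coupling terms $2\chi\nabla\times\omega$ and $2\chi\nabla\times u$ can be absorbed by this dissipation together with the damping $4\chi\omega$.

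\textbf{Step 1: the linear structure.} Apply $\partial^\alpha$ with $|\alpha|\le 3$ and take $L^2$ inner products with $\partial^\alpha u$, $\partial^\alpha\omega$, $\partial^\alpha b$. Since $\nabla\times$ is symmetric on $\mathbb T^3$, the two coupling terms combine to
\[
4\chi\int \nabla\times\partial^\alpha\omega\cdot\partial^\alpha u\,dx\le 2\chi\|\nabla\partial^\alpha u\|_{L^2}^2+2\chi\|\partial^\alpha\omega\|_{L^2}^2,
\]
using $\|\nabla\times f\|_{L^2}\le\|\nabla f\|_{L^2}$. This is strictly less than the available dissipation $\chi\|\nabla\partial^\alpha u\|_{L^2}^2+4\chi\|\partial^\alpha\omega\|_{L^2}^2$ only in the $\omega$-part. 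For the $u$-part we need a sharper split. Writing
\[
\|\nabla u\|^2=\|\nabla\times u\|^2 \ (\nabla\cdot u=0), \qquad 4\chi\langle \omega,\nabla\times u\rangle \le \chi\|\nabla\times u\|^2+4\chi\|\omega\|^2,
\]
shows the linear form is only nonnegative, with $4\chi\|\omega-\tfrac12\nabla\times u\|^2$-type degeneracy. To get strict coercivity, test instead with a slightly weighted energy, e.g.\ multiply the $\omega$-equation estimate by a constant $\lambda\neq 1$. Alternatively, use $\eta\|\nabla\omega\|^2$ together with Poincaré, which is valid thanks to the zero means: the mean of $\omega$ is not conserved exactly, but it satisfies $\frac{d}{dt}\bar\omega+4\chi\bar\omega=-\overline{u\cdot\nabla\omega}=0$, and the means of $u$ and $b$ are preserved. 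The resulting bound is
\[
\frac{d}{dt}E+c_0\big(\|\nabla u\|_{H^3}^2+\|\omega\|_{H^3}^2+\|\nabla\omega\|_{H^3}^2+\|\nabla b\|_{H^3}^2\big)\le N(t).
\]
I expect this coercivity check to be the delicate linear point. The magnetic term $b\cdot\nabla b$ against $b\cdot\nabla u$ cancels after integration by parts at every derivative level.

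\textbf{Step 2: nonlinear terms.} Use commutator estimates: the top-order transport terms $u\cdot\nabla\partial^\alpha f$ vanish by incompressibility. This gives, for example, $|\langle[\partial^\alpha,u\cdot\nabla]u,\partial^\alpha u\rangle|\le C\|\nabla u\|_{L^\infty}\|u\|_{H^3}^2$. Since $\|\nabla u\|_{L^\infty}\lesssim\|\nabla u\|_{H^2}$ and $\|u\|_{H^3}\lesssim\|\nabla u\|_{H^3}$ by Poincaré, this is bounded by $CE^{1/2}\,D$, where $D$ denotes the dissipation. The terms in $b$ are handled the same way: $\langle\partial^\alpha(b\cdot\nabla u),\partial^\alpha b\rangle+\langle\partial^\alpha(b\cdot\nabla b),\partial^\alpha u\rangle$ reduces, after cancellation, to commutators bounded by $C(\|\nabla u\|_{H^3}+\|\nabla b\|_{H^3})^2\|b\|_{H^3}$. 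This is the place where having full dissipation for $u$ (through $\chi$) and for $b$ matters. The velocity obeys no equation with damping beyond $\chi\Delta u$, but that suffices. Hence
\[
\frac{d}{dt}E+c_0D\le CE^{1/2}D .
\]

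\textbf{Step 3: closing.} Under the bootstrap assumption $E\le 4C^2\epsilon^2$ with $\epsilon$ small, we have $CE^{1/2}\le c_0/2$, so $E(t)\le E(0)\le\epsilon^2$. This proves \eqref{m1} and closes the bootstrap. Moreover, Poincaré gives $D\ge cE$, so $\frac{d}{dt}E+cE\le0$ and $E(t)\le E(0)e^{-ct}$, which is \eqref{m2}. Uniqueness follows from an $L^2$ estimate on the difference of two solutions, using the $H^3$ bounds.
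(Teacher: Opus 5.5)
Your argument is correct, and it takes a genuinely different and simpler route than the paper. It works provided Step 1 is carried out by the Poincaré route you list as the alternative.

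\textbf{Drop the weighting idea.} The suggestion of weighting the $\omega$-energy by $\lambda\neq1$ does not help by itself. The coupling becomes $2\chi(1+\lambda)\langle\nabla\times\partial^\alpha u,\partial^\alpha\omega\rangle$. The resulting form $\chi|a|^2-2\chi(1+\lambda)a\cdot w+4\chi\lambda|w|^2$ is indefinite for every $\lambda\neq1$: along $w=ta$ its minimum is $-\chi\frac{(1-\lambda)^2}{4\lambda}|a|^2$. Any coercivity must therefore come from $\eta\|\nabla\omega\|_{L^2}^2$.

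\textbf{What to write out in Step 1.} Since $\nabla\cdot u=0$ gives $\|\nabla\partial^\alpha u\|_{L^2}=\|\nabla\times\partial^\alpha u\|_{L^2}$, you have the identity
\begin{equation*}
\chi\|\nabla\partial^\alpha u\|_{L^2}^2+4\chi\|\partial^\alpha\omega\|_{L^2}^2-4\chi\langle\nabla\times\partial^\alpha u,\partial^\alpha\omega\rangle=\chi\|\nabla\times\partial^\alpha u-2\partial^\alpha\omega\|_{L^2}^2 .
\end{equation*}
Combine it with
\begin{equation*}
\|\nabla\partial^\alpha u\|_{L^2}^2\leq 2\|\nabla\times\partial^\alpha u-2\partial^\alpha\omega\|_{L^2}^2+8C_P^2\|\nabla\partial^\alpha\omega\|_{L^2}^2 ,
\end{equation*}
where $C_P$ is the Poincaré constant of $\mathbb T^3$. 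Poincaré applies to $\partial^\alpha\omega$ for $\alpha=0$ because $\bar\omega(t)=e^{-4\chi t}\bar\omega_0=0$, as you noted. Hence
\begin{equation*}
\chi\|\nabla\times\partial^\alpha u-2\partial^\alpha\omega\|_{L^2}^2+\eta\|\nabla\partial^\alpha\omega\|_{L^2}^2\geq c\big(\|\nabla\partial^\alpha u\|_{L^2}^2+\|\nabla\partial^\alpha\omega\|_{L^2}^2\big)
\end{equation*}
with $c=c(\chi,\eta,C_P)>0$. With this in place, your Steps 2 and 3 go through as written.

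\textbf{Comparison with the paper.} The paper bounds the coupling by $K_1\leq\chi\|\nabla u\|_{\dot H^3}^2+4\chi\|\omega\|_{\dot H^3}^2$. This uses up all of the velocity dissipation, so its $H^3$ inequality \eqref{hs} is not closed. The paper then:
\begin{itemize}
\item recovers $\chi\|\nabla\times u\|_{\dot H^2}^2$ by testing the $\omega$-equation against $\nabla^2(\nabla\times u)$ and substituting the $u$-equation for $\partial_t u$;
\item builds the functional $\mathcal F$ with the cross term $-\int\nabla^2\omega\cdot\nabla^2(\nabla\times u)$ and proves its exponential decay;
\item controls $\nabla\cdot\omega$ by a separate estimate;
\item feeds the time-integrable quantity $\|\nabla(u,\omega,b)\|_{\dot H^2}$ back into \eqref{hs}.
\end{itemize}
Your observation is that the only degenerate direction of the linear form, $\omega\approx\frac12\nabla\times u$, is already controlled by $\eta\|\nabla\omega\|_{L^2}^2$. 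So the velocity is never actually undamped; the momentum equation carries the full viscosity $\mu+\chi=\chi>0$. You get the single closed inequality $\frac{d}{dt}E+c_0D\leq CE^{1/2}D$ with $D\geq E$. From it, \eqref{m1} and \eqref{m2} follow immediately, with no cross terms and no separate divergence estimate.

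The paper's cross-term (hypocoercivity) device is what you genuinely need when a component has no dissipation at all, as for $B$ in Theorem \ref{main2}. For Theorem \ref{main1} your direct energy method suffices and is considerably shorter.
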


\begin{rem}

(1) Theorem \ref{main1} reveals the stabilization effect of the presence of the microstructure which actually generates extra smoothing effect and stabilizes the flow. Especially the dynamics micro-rotation viscosity $\chi > 0$ is essential for the magneto-micropolar fluid, otherwise the global motion is unaffected by the micro-rotations, and \eqref{MMEZ} reduces to the inviscid and resistive MHD equations which global well-posed problem is still a challenging problem.



(2) Note that $\omega=(0, 0, \omega_3)$ for the 2D magneto-micropolar equations, so our methods used here also yield the global stability and exponential decay of solutions for the 2D magneto-micropolar equations \eqref{MMEZ} with minor technical revisions.

\end{rem}

When $\mu=0$, $\eta>0$ and $\nu=0$, system \eqref{MME} reduces to the following system
\begin{align}\label{MMEZZ}
\left\{ \begin{array}{lcl}
\partial_t u + u \cdot \nabla u - \chi \Delta u  = - \nabla p + 2\chi \nabla\times \omega + b\cdot \nabla b,\\
\partial_t \omega + u \cdot \nabla \omega + 4\chi \omega - \kappa \nabla\nabla\cdot \omega - \eta \Delta \omega = 2\chi \nabla\times u,\\
\partial_t b + u \cdot \nabla b  =  b\cdot \nabla u,\\
\nabla \cdot u=0, \nabla \cdot b=0,\\
u(x, 0) = u_{0}(x), \omega(x, 0) = \omega_{0}(x), b(x, 0) = b_{0}(x).
\end{array} \right.
\end{align}
If the micro-rotation viscosity $\chi=0$, then \eqref{MMEZZ} becomes the inviscid MHD equations
\begin{align}\label{MHDZZ}
\left\{ \begin{array}{lcl}
\partial_t u + u \cdot \nabla u  = - \nabla p + b\cdot \nabla b,\\
\partial_t b + u \cdot \nabla b  =  b\cdot \nabla u,\\
\nabla \cdot u=0, \nabla \cdot b=0.
\end{array} \right.
\end{align}
It is not known if smooth solutions of the 2D and 3D ideal MHD systems \eqref{MHDZZ} blow up in finite time. Using the Els\"{a}sser variable, Bardos, Sulem and Sulem \cite{BSS88} successfully proved the existence of global solutions of \eqref{MHDZZ} for small initial data in a weighted H\"{o}lder space subject to a strong magnetic field. Later, the global in time vanishing viscosity limit
of the full diffusive MHD system to the ideal equations under a strong magnetic field was further studied by Cai and Lei \cite{CL18}, He, Xu and Yu \cite{HXY18} and Wei and Zhang \cite{WZ17}.

The second goal of this paper is to study the global well-posedness of \eqref{MMEZZ} near a background magnetic field. More precisely, the background magnetic field refers to the special steady-state solution $(u^{(0)}, \omega^{(0)}, b^{(0)})$, where
$$
u^{(0)} = 0,\ \omega^{(0)} =  0,\ b^{(0)}= \alpha
$$
with $\alpha$ satisfying the Diophantine condition defined in Lemma \ref{DC} below. Any perturbation $(u, \omega, b)$ near $(u^{(0)}, \omega^{(0)}, b^{(0)})$ with $B=b-b^{(0)}$ is governed by
\begin{align}\label{MMEZZP}
\left\{ \begin{array}{lcl}
\partial_t u + u \cdot \nabla u - \chi \Delta u  = - \nabla p + 2\chi \nabla\times \omega + B\cdot \nabla B + \alpha\cdot \nabla B,\\
\partial_t \omega + u \cdot \nabla \omega + 4\chi \omega - \kappa \nabla\nabla\cdot \omega - \eta \Delta \omega = 2\chi \nabla\times u,\\
\partial_t B + u \cdot \nabla B  =  B\cdot \nabla u + \alpha\cdot \nabla u,\\
\nabla \cdot u=0, \nabla \cdot B=0,\\
u(x, 0) = u_{0}(x), \omega(x, 0) = \omega_{0}(x), B(x, 0) = B_{0}(x).
\end{array} \right.
\end{align}


Now we are in a position to state our second main result on the global stability and large time behavior of solutions of system \eqref{MMEZZP} as follows.

\begin{thm}\label{main2}
Consider the system \eqref{MMEZZP} with $|\alpha|^2<\chi<2$, $\kappa\geq0$ and $\eta>0$. Suppose that $(u_0, \omega_0, B_0) \in H^N(\mathbb T^3)$ with $N\geq 4r+11$ ($r>2$) and $\nabla\cdot u_0=\nabla\cdot B_0 = 0$. Assume also that $\int_{\mathbb T^3} u_0 dx=\int_{\mathbb T^3} \omega_0 dx=\int_{\mathbb T^3} B_0 dx=0$. Then there exists a constant $\epsilon > 0$ such that, if
\begin{equation}\label{Sma}
\|u_{0}\|_{H^N(\mathbb T^3)} + \|\omega_{0}\|_{H^N(\mathbb T^3)} + \|B_{0}\|_{H^N(\mathbb T^3)}\leq \epsilon,
\end{equation}
then system \eqref{MMEZZP} has a unique global solution $(u, \omega, B)$ satisfying for any $ t > 0$,
\begin{equation}\label{M1}
\|u(t)\|_{H^N(\mathbb T^3)} + \|\omega(t)\|_{H^N(\mathbb T^3)} + \|B(t)\|_{H^N(\mathbb T^3)}\leq C\epsilon,
\end{equation}
\begin{equation}\label{M2}
\|u(t)\|_{H^{r+5}(\mathbb T^3)} + \|\omega(t)\|_{H^{r+5}(\mathbb T^3)} + \|B(t)\|_{H^{r+5}(\mathbb T^3)} \leq C (1+t)^{-\frac32}.
\end{equation}

\end{thm}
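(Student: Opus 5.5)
The proof will be a bootstrap (continuity) argument built on two energies: a high-order energy $E_N(t)=\|(u,\omega,B)(t)\|_{H^N}^2$ that stays small but may grow slowly, and a lower-order energy $E_{r+5}(t)$ that decays algebraically. Local well-posedness in $H^N$ is standard because $u$ and $\omega$ are parabolic and $B$ is transported. Uniqueness follows from an $L^2$ estimate of the difference of two solutions.

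\textbf{Linear dissipation.} The key linear observation concerns the coupling terms $2\chi\nabla\times\omega$ and $2\chi\nabla\times u$.
\begin{itemize}
\item Testing the $u$- and $\omega$-equations with $u$ and $\omega$ gives
\[
\chi\|\nabla u\|^2+4\chi\|\omega\|^2-4\chi\int \nabla\times u\cdot\omega\,dx+\eta\|\nabla\omega\|^2+\kappa\|\nabla\cdot\omega\|^2 .
\]
\item Since $\|\nabla\times u\|=\|\nabla u\|$ for divergence-free $u$, this equals $\chi\|\nabla\times u-2\omega\|^2$ plus the $\eta$ and $\kappa$ terms.
\item To get coercivity in both $\nabla u$ and $\omega$, I would combine this with the Poincaré inequality for $\omega$. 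The mean of $\omega$ decays like $e^{-4\chi t}$, since the mean of $u\cdot\nabla\omega$ vanishes, so it is harmless.
\item This yields dissipation $c(\|\nabla u\|_{H^k}^2+\|\omega\|_{H^{k+1}}^2)$. The $\alpha\cdot\nabla$ terms cancel in the energy.
\end{itemize}

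The magnetic field has no dissipation. I would recover it from the cross term $-\frac{d}{dt}\langle u,\alpha\cdot\nabla B\rangle_{H^k}$, which produces $\|\alpha\cdot\nabla B\|_{H^k}^2$ minus terms involving $\|\nabla u\|\,\|\nabla B\|$. The hypothesis $|\alpha|^2<\chi$ is what allows a weight-one cross term to be absorbed by the $\chi$-dissipation; I would check the constants so that the linear form stays positive. The Diophantine condition (Lemma \ref{DC}) then gives $\|B\|_{H^s}\le C\|\alpha\cdot\nabla B\|_{H^{s+r}}$. So the lower norm $\|B\|_{H^{r+5}}$ is controlled by $\|\alpha\cdot\nabla B\|_{H^{2r+5}}$, at the price of $r$ derivatives. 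This loss is why $N\ge 4r+11$ is required.

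\textbf{Nonlinear estimates.}
\begin{itemize}
\item At the top level $H^N$, use commutator estimates and $\nabla\cdot u=0$. The only dangerous terms are $\langle \partial^N(u\cdot\nabla B),\partial^NB\rangle$ and $\langle\partial^N(B\cdot\nabla u),\partial^N B\rangle$, which are bounded by $\|\nabla u\|_{L^\infty}E_N+\|\nabla B\|_{L^\infty}\|u\|_{H^{N+1}}\|B\|_{H^N}$.
\item Absorbing $\|u\|_{H^{N+1}}$ into the dissipation gives
\[
\frac{d}{dt}E_N\le C\big(\|u\|_{H^{3}}+\|B\|_{H^3}^2+\|\omega\|_{H^3}\big)E_N .
\]
\item At the low level $k=r+5$, also include the cross term with $k$ chosen around $2r+5$. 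Interpolate $\|\cdot\|_{H^{2r+5}}$ between $H^{r+5}$ and $H^N$, and use smallness of $E_N$, to obtain the differential inequality
\[
\frac{d}{dt}\mathcal E_{\rm low}+c\,\mathcal E_{\rm low}^{1+\theta}E_N^{-\theta}\le 0,
\]
where $\mathcal E_{\rm low}\sim E_{r+5}$ and $\theta=r/(N-r-5)$-type.
\item Integrating gives $\mathcal E_{\rm low}\lesssim (1+t)^{-1/\theta}$ as long as $E_N\lesssim \epsilon^2$. The hypothesis $N\ge 4r+11$ should be arranged so that $1/\theta\ge 3$, giving \eqref{M2}.
\end{itemize}

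\textbf{Closing the bootstrap and main obstacle.}
\begin{itemize}
\item Integrating the coefficient of $E_N$ requires $\int_0^\infty\|B\|_{H^3}^2dt<\infty$, which holds since $(1+t)^{-3}$ is integrable. It also requires $\int_0^\infty\|u\|_{H^3}dt<\infty$, which holds since the $u$-part decays like $(1+t)^{-3/2}$.
\item Gronwall then gives $E_N\le C\epsilon^2$, and the bootstrap closes.
\end{itemize}
The main obstacle is the step that makes the lower-order decay rate fast enough to render $\|\nabla B\|_{L^\infty}$ and $\|\nabla u\|_{L^\infty}$ time-integrable. This means tracking the derivative loss from the Diophantine inequality against the interpolation exponent, and making sure that the cross-term control of $B$ only needs $|\alpha|^2<\chi$. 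The restriction $\chi<2$ likely enters when absorbing the coupling $2\chi\nabla\times\omega$ against the damping $4\chi\omega$ in the cross-term estimate. I would first try weighted combinations of the energies to verify that this absorption works.
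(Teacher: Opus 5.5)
Your architecture matches the paper's: a bootstrap on $\|(u,\omega,B)\|_{H^N}$, the cross term $\langle\nabla^k u,\nabla^k(\alpha\cdot\nabla B)\rangle$ with Lemma \ref{DC} to recover dissipation for $B$, interpolation against the bounded $H^N$ norm to get $E'+cE^{4/3}\le 0$ at the $H^{r+5}$ level, and Gronwall at level $N$ with an integrable coefficient.

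\textbf{Where you differ: velocity dissipation.} The paper's bound on $L_1$ throws away exactly $\chi\|\nabla^{r+5}(\nabla\times u-2\omega)\|^2$. It then recovers $\|\nabla\times u\|_{H^{r+4}}$ in Step II with a second cross term $\int\nabla^k\omega\cdot\nabla^k(\nabla\times u)$. Your observation is correct and simpler:
$$\chi\|\nabla^k(\nabla\times u-2\omega)\|^2+\eta\|\nabla^{k+1}\omega\|^2\ge c\big(\|\nabla^{k+1}u\|^2+\|\nabla^{k+1}\omega\|^2\big),$$
by Poincar\'e for mean-zero $\omega$. It needs no second cross term and gives one more derivative of $u$. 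So the $B$ cross terms can run up to $k=r+4$, and the level-$N$ estimate has $u$-dissipation at $N+1$.

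Put a large weight on the energy relative to the cross terms. Then the costs $|\alpha|^2\|\nabla^{k+1}u\|^2$, $\chi\|\nabla^{k+2}u\|\|\nabla^k(\alpha\cdot\nabla B)\|$ and $2\chi\|\nabla^{k+1}\omega\|\|\nabla^k(\alpha\cdot\nabla B)\|$ are absorbed for every $\chi>0$ and every Diophantine $\alpha$. So your guesses about where $|\alpha|^2<\chi$ and $\chi<2$ enter are off: in your scheme these hypotheses are not needed. In the paper they come only from adding \eqref{T7} and \eqref{T11} with unit weights. That requires $\tfrac{3}{2}\chi-\tfrac{1}{2}\chi-|\alpha|^2>0$ and $\tfrac{2-\chi}{4}-\tfrac{2-\chi}{16}>0$.

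\textbf{The gap: Diophantine bookkeeping.} As written, this step fails. The bound $\|B\|_{H^{r+5}}\lesssim\|\alpha\cdot\nabla B\|_{H^{2r+5}}$ is true but useless here. A cross term at level $k$ must be paid for with $\|\nabla^{k+2}u\|$, through $\chi\Delta u$. Cross terms at $k\approx 2r+5$ therefore push the functional up to $H^{2r+7}$, where the same derivative loss reappears. Nor can their linear costs be bounded by interpolating with the non-decaying $H^N$ norm. That leaves sublinear terms like $\delta^{2\vartheta}D^{1-\vartheta}$ which the dissipation $D$ cannot absorb.

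\textbf{The fix (the paper's bookkeeping).}
\begin{enumerate}
\item Keep the cross terms at levels $\le r+3$ (or $\le r+4$ in your version).
\item Apply Lemma \ref{DC} at low regularity: $\|B\|_{H^3}\le C\|\alpha\cdot\nabla B\|_{H^{r+3}}$.
\item Interpolate the non-dissipated norm in the functional: $\|B\|_{H^{r+5}}\le C\|B\|_{H^3}^{3/4}\|B\|_{H^N}^{1/4}$. This is exactly where $N\ge 4r+11$ comes from.
\item This gives $E\lesssim D^{3/4}$, hence $E\lesssim(1+t)^{-3}$. Your $\theta$ should be $(r+2)/(N-r-5)=\tfrac{1}{3}$, not $r/(N-r-5)$.
\end{enumerate}

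\textbf{Two further omissions.} The same interpolation with exponent $\tfrac{1}{2}$ (needing $N\ge 2r+7$) is required to absorb terms like $\|(u,\omega,B)\|_{H^3}\|B\|_{H^{r+5}}^2$ in the $H^{r+5}$ estimate, since $B$ has no dissipation there. Your proposal does not treat these. Also, at level $N$ the term $\langle\partial^N(B\cdot\nabla B),\partial^N u\rangle$ loses a derivative on $B$. Either cancel it against $\langle B\cdot\nabla\partial^N u,\partial^N B\rangle$, or move the derivative onto $u$ and use your $H^{N+1}$ dissipation.
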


\vskip .1in
\begin{rem}

(1) Theorem \ref{main2} also reveals the stabilization effect of the microstructure. If the micro-rotation effect of particles of the fluid is neglected, that is $\chi=0$, \eqref{MMEZZ} reduces to the inviscid MHD equations which yields the introduction of the weighted spaces to show the global well-posedness.

(2) The assumption of the structure condition $|\alpha|^2<\chi<2$ is due to the lack of kinematic viscosity and magnetic diffusion, and the interaction of the linear terms $2\chi \nabla\times \omega$, $2\chi \nabla\times u$ and the perturbation terms $\alpha\cdot \nabla B$, $\alpha\cdot \nabla u$. An interesting problem is whether or not this assumption can be removed.

\end{rem}

The rest of this paper is divided into two sections. Section 2 establishes Theorem \ref{main1} while Section 3 proves Theorem \ref{main2}. Let us end this section with some notations which shall be frequently used in this paper. To simplify the notations, we will write $\int f$ for $\int_{\mathbb T^3} f dx$, $\|f\|_{L^q}$ for $\|f\|_{L^q(\mathbb T^3)}$, $\|f\|_{\dot{H}^s}$ for $\|f\|_{\dot{H}^s(\mathbb T^3)}$, and $\|f\|_{H^s}$ for $\|f\|_{H^s(\mathbb T^3)}$.

\vskip .3in
\section{Proof of Theorem \ref{main1}}
\vskip .1in

This section proves Theorem \ref{main1}. Since the local well-posedness of \eqref{MMEZ} in $H^{3}$ follows from standard approach such as Friedrichs method, the crucial piece to prove Theorem \ref{main1} is to establish the global {\it{a priori}} bounds in $H^{3}$ for $(u, \omega, b)$.

\begin{proof}[Proof of Theorem \ref{main1}]

Standard calculations yield
\begin{align}\label{l2}
\frac{1}{2} \frac{d}{dt}\|(u, \omega, b)\|_{L^{2}}^{2} + \eta \|\nabla \omega\|_{L^{2}}^{2} + \nu \|\nabla b\|_{L^{2}}^{2}\leq 0.
\end{align}
Applying $\nabla^3$ to \eqref{MMEZ}, and taking the $L^2$-inner product to the resultants with $(\nabla^3 u, \nabla^3 \omega, \nabla^3 b)$ respectively, we derive that
\begin{equation}\label{t1}
\begin{aligned}
&\frac 12 \frac{d}{dt} \|(u, \omega, b)\|_{\dot{H}^3}^2 + \chi \|\nabla u\|_{\dot{H}^3}^2 + 4\chi \|\omega\|_{\dot{H}^3}^2 + \eta \|\nabla \omega\|_{\dot{H}^3}^2 + \nu \|\nabla b\|_{\dot{H}^3}^2\\
&\leq 4\chi \int \nabla^3 (\nabla\times u) \cdot \nabla^3 \omega + \int [\nabla^3, b\cdot \nabla]b \cdot \nabla^3 u - \int [\nabla^3, u\cdot \nabla]u \cdot \nabla^3 u\\
&\ \ \ - \int [\nabla^3, u\cdot \nabla]\omega \cdot \nabla^3 \omega + \int [\nabla^3, b\cdot \nabla]u \cdot \nabla^3 b - \int [\nabla^3, u\cdot \nabla]b \cdot \nabla^3 b\\
&:= K_1 + K_2 + K_3 + K_4 + K_5 + K_6.
\end{aligned}
\end{equation}
Taking advantage of Young's inequality, we get
\begin{align*}
K_1 \leq \chi \|\nabla u\|_{\dot{H}^3}^2 + 4\chi \|\omega\|_{\dot{H}^{3}}^2.
\end{align*}
It follows from H\"{o}lder's inequality and the commutator estimate of \cite{KPV91} that
\begin{align*}
K_2&\leq\|[\nabla^3, b\cdot \nabla]b\|_{L^2} \|\nabla^3 u\|_{L^2}\\
&\leq C \|\nabla b\|_{L^\infty} \|\nabla b\|_{\dot{H}^{2}} \|\nabla u\|_{\dot{H}^{2}}\\
&\leq C \|\nabla b\|_{H^{2}} \|(\nabla u, \nabla b)\|_{\dot{H}^{2}}^2.
\end{align*}
Similarly, we have
\begin{align*}
&K_3\leq C \|\nabla u\|_{H^{2}} \|\nabla u\|_{\dot{H}^{2}}^2,\\
&K_4\leq C \|(\nabla u, \nabla \omega)\|_{H^{2}} \|(\nabla u, \nabla \omega)\|_{\dot{H}^{2}}^2,\\
&K_5 + K_6\leq C \|(\nabla u, \nabla b)\|_{H^{2}} \|(\nabla u, \nabla b)\|_{\dot{H}^{2}}^2.
\end{align*}
Substituting the above bounds into \eqref{t1}, one arrives at
\begin{equation}\label{hs1}
\begin{aligned}
&\frac 12 \frac{d}{dt} \|(u, \omega, b)\|_{\dot{H}^3}^2 + \eta \|\nabla \omega\|_{\dot{H}^3}^2  + \nu \|\nabla b\|_{\dot{H}^3}^2\\
&\leq C \|(\nabla u, \nabla \omega, \nabla b)\|_{H^{2}} \|(\nabla u, \nabla \omega, \nabla b)\|_{\dot{H}^{2}}^2.
\end{aligned}
\end{equation}
Adding \eqref{l2} and \eqref{hs1} up, we have
\begin{equation}\label{hs}
\begin{aligned}
&\frac{d}{dt} (\|(u, \omega, b)\|_{H^3}^2 + 2\eta \|\nabla \omega\|_{H^3}^2 + 2\nu \|\nabla b\|_{H^3}^2\\
&\leq C \|(\nabla u, \nabla \omega, \nabla b)\|_{\dot{H}^{2}} \|(u, \omega, b)\|_{H^3}^2.
\end{aligned}
\end{equation}
Due to the lack of kinematic viscosity, which leads to no velocity dissipative effect in \eqref{hs}, we cannot show the global existence of solutions from \eqref{hs} directly. To overcome the difficulty, we turn to establish the exponential decay of $\|(\nabla u, \nabla \omega, \nabla b)(t)\|_{\dot{H}^{2}}$ or alternatively $\|(\nabla\times u, \nabla\times \omega, \nabla\times b)(t)\|_{\dot{H}^{2}}$ by fully exploiting the structure of system \eqref{MMEZ}. Applying $\nabla^2\nabla\times $ to \eqref{MMEZ}, then as in the estimate of \eqref{hs1}, we obtain
\begin{align}\label{t2}
&\frac{d}{dt} \|(\nabla\times u, \nabla\times \omega, \nabla\times b)\|_{\dot{H}^{2}}^2) + 2\eta \|\nabla\times \omega\|_{\dot{H}^{3}}^2 + 2\nu \|\nabla\times b\|_{\dot{H}^{3}}^2\nonumber\\
&\leq C \|(\nabla u, \nabla \omega, \nabla b)\|_{H^{2}} \|(\nabla\times u, \nabla\times \omega, \nabla\times b)\|_{\dot{H}^{2}}^2,
\end{align}
where we have also used the fact
\begin{equation*}
\begin{aligned}
\int \nabla\times (\nabla\nabla\cdot \omega)\cdot \nabla\times\omega
&=\int \varepsilon_{ijk}\partial_j\partial_k(\nabla\cdot \omega) \varepsilon_{ilm}\partial_l\omega_m\\
&=\int (\partial_j\partial_k(\nabla\cdot \omega) \partial_j\omega_k - \partial_j\partial_k(\nabla\cdot \omega) \partial_k\omega_j)\\
&=0.
\end{aligned}
\end{equation*}
Obviously, formula \eqref{t2} is not a closed differential inequality because of the lack of kinematic viscosity. This forces us to include suitable extra terms in this energy estimates. We discover that the term $\|\nabla\times u\|_{\dot{H}^2}^2$ serves our purpose perfectly, which comes from the equation of micro-rotational velocity field $\omega$ by regarding $2\chi\nabla\times u$ as a perturbation term. More precisely, applying $\nabla^2$ to the second equation of \eqref{MMEZ}, multiplying the result by $\nabla^2 (\nabla\times u)$ and integrating with respect to space domain, we get
\begin{equation}\label{t3}
\begin{aligned}
2\chi\|\nabla\times u\|_{\dot{H}^2}^2 &= \int \nabla^2 \omega_t \cdot \nabla^2 (\nabla\times u) + \int \nabla^2 (u\cdot\nabla \omega) \cdot \nabla^2 (\nabla\times u)\\
&\ \ \ + 4\chi \int \nabla^2 \omega \cdot \nabla^2 (\nabla\times u) - \kappa \int \nabla^2 (\nabla\nabla\cdot \omega) \cdot \nabla^2 (\nabla\times u)\\
&\ \ \  - \eta \int \nabla^2 (\Delta \omega) \cdot \nabla^2 (\nabla\times u)\\
&:= J_1 + J_2 + J_3 + J_4 + J_5.
\end{aligned}
\end{equation}
It does not appear possible to bound $J_1$ directly. Our strategy is to make use of the special structure of this term via the equation of velocity field $u$. This substitution generates more terms, but fortunately all the resulting terms can be bounded suitably. More precisely, we have
\begin{equation*}
\begin{aligned}
J_1 &= \frac{d}{dt}\int \nabla^2 \omega \cdot \nabla^2 (\nabla\times u) - \int \nabla^2 \omega \cdot \nabla^2 (\nabla\times u_t)\\
&=\frac{d}{dt}\int \nabla^2 \omega \cdot \nabla^2 (\nabla\times u) - 2\chi  \int \nabla^2 \omega \cdot \nabla^2 \nabla\times (\nabla\times \omega) - \int \nabla^2 \omega \cdot \nabla^2 \nabla\times (b\cdot\nabla b)\\
&\ \ \ + \int \nabla^2 \omega \cdot \nabla^2 \nabla\times (u\cdot\nabla u) - \chi \int \nabla^2 \omega \cdot \nabla^2 \nabla\times (\Delta u)\\
&:= \frac{d}{dt}\int \nabla^2 \omega \cdot \nabla^2 (\nabla\times u) + J_{11} + J_{12} + J_{13} + J_{14}.
\end{aligned}
\end{equation*}
The property of $\nabla\times$ implies
\begin{equation*}
\begin{aligned}
J_{11}=-2\chi \|\nabla\times \omega\|_{\dot{H}^2}^2.
\end{aligned}
\end{equation*}
Using H\"{o}lder's inequality, the commutator estimate and the Poincar\'{e} inequality, one finds
\begin{equation*}
\begin{aligned}
J_{12}
&= - \int \nabla^2 (\nabla\times \omega) \cdot \nabla^2 (b\cdot\nabla b)\\
&\leq \|\nabla^2 (\nabla\times \omega)\|_{L^2} \|\nabla^2 (b\cdot\nabla b)\|_{L^2}\\
&\leq C \|\nabla b\|_{H^{2}} \|(\nabla\times \omega, \nabla\times b)\|_{\dot{H}^{2}}^2
\end{aligned}
\end{equation*}
Similarly, we have
\begin{equation*}
\begin{aligned}
J_{13}
\leq C \|\nabla u\|_{H^{2}} \|(\nabla\times u, \nabla\times \omega)\|_{\dot{H}^{2}}^2.
\end{aligned}
\end{equation*}
Applying integration by parts and Young's inequality, we obtain
\begin{equation*}
\begin{aligned}
J_{14}\leq \frac{\chi}{8} \|\nabla\times u\|_{\dot{H}^2}^2 + C \|\nabla\times \omega\|_{\dot{H}^{3}}^2.
\end{aligned}
\end{equation*}
Substituting the bounds of $J_{11}$-$J_{14}$ into $J_1$, we derive that
\begin{equation*}
\begin{aligned}
J_{1}&\leq \frac{d}{dt}\int \nabla^2 \omega \cdot \nabla^2 (\nabla\times u) - 2\chi\|\nabla\times \omega\|_{\dot{H}^2}^2\\
&\ \ \ + C \|(\nabla u, \nabla b)\|_{H^{2}} \|(\nabla\times u, \nabla\times \omega, \nabla\times b)\|_{\dot{H}^{2}}^2\\
&\ \ \ + \frac{\chi}{8} \|\nabla\times u\|_{\dot{H}^2}^2 + C \|\nabla\times \omega\|_{\dot{H}^{3}}^2.
\end{aligned}
\end{equation*}
As in the estimate of $J_{12}$, we have
\begin{equation*}
\begin{aligned}
J_2\leq C \|(\nabla u, \nabla \omega)\|_{H^{2}} \|(\nabla\times u, \nabla\times \omega)\|_{\dot{H}^{2}}^2.
\end{aligned}
\end{equation*}
Applying Young's inequality and the Poincar\'{e} inequality to yield
\begin{equation*}
\begin{aligned}
J_3&\leq \frac{\chi}{8} \|\nabla^2 (\nabla\times u)\|_{L^2}^2 + C \|\nabla^2 \omega\|_{L^2}^2\\
&\leq \frac{\chi}{8} \|\nabla\times u\|_{\dot{H}^2}^2 + C \|\nabla\times \omega\|_{\dot{H}^{3}}^2.
\end{aligned}
\end{equation*}
Similarly,
\begin{equation*}
\begin{aligned}
J_4+J_5\leq\frac{\chi}{8} \|\nabla\times u\|_{\dot{H}^2}^2 + C \|\nabla\times \omega\|_{\dot{H}^{3}}^2.
\end{aligned}
\end{equation*}
Incorporating the above bounds into \eqref{t3}, one infers that
\begin{equation}\label{t4}
\begin{aligned}
&\chi\|\nabla\times u\|_{\dot{H}^2}^2 + 2\chi\|\nabla\times \omega\|_{\dot{H}^2}^2\\
&\leq \frac{d}{dt}\int \nabla^2 \omega \cdot \nabla^2 (\nabla\times u) + C_1 \|\nabla\times \omega\|_{\dot{H}^{3}}^2\\
&\ \ \ + C \|(\nabla u, \nabla \omega, \nabla b)\|_{H^{2}} \|(\nabla\times u, \nabla\times \omega, \nabla\times b)\|_{\dot{H}^{2}}^2.
\end{aligned}
\end{equation}
Multiplying \eqref{t2} by $A$ with $A>1$ to be chosen later and adding the result to \eqref{t4}, we have
\begin{equation}\label{t5}
\begin{aligned}
&\frac{d}{dt} \Big(A\|(\nabla\times u, \nabla\times \omega, \nabla\times b)\|_{\dot{H}^{2}}^2) - \int \nabla^2 \omega \cdot \nabla^2 (\nabla\times u) \Big)\\
&\ \ \ + (2 A \eta - C_1) \|\nabla\times \omega\|_{\dot{H}^{3}}^2 + 2 A \nu \|\nabla\times b\|_{\dot{H}^{3}}^2 + \chi\|\nabla\times u\|_{\dot{H}^{2}}^2\\
&\leq C_2 A \|(\nabla u, \nabla \omega, \nabla b)\|_{H^{2}} \|(\nabla\times u, \nabla\times \omega, \nabla\times b)\|_{\dot{H}^{2}}^2.
\end{aligned}
\end{equation}
In the following, with the above estimates at our disposals, we shall complete the proof by using the bootstrapping argument. Let
$$
\mathcal E(t)=\sup_{0\leq\tau\leq t} \|(u, \omega, b)(\tau)\|_{H^3}^2 + \int_{0}^{t} \|(\nabla \omega, \nabla b)(\tau)\|_{H^3}^2 d\tau.
$$
Then for any $t>0$, we obtain from \eqref{hs},
\begin{align}\label{e}
\mathcal E(t)\leq \mathcal E(0) + C_3 \mathcal E(t) \int_0^t \|(\nabla u, \nabla \omega, \nabla b)(\tau)\|_{\dot{H}^{2}} d\tau.
\end{align}
Then we make the ansatz that, for $t\in[0,T ]$ with $T > 0$,
\begin{equation}\label{e1}
\mathcal E(t)\leq 6\epsilon^2.
\end{equation}
Let
$$
\mathcal F(t)=A \|(\nabla\times u, \nabla\times \omega, \nabla\times b)(t)\|_{\dot{H}^{2}}^2 - \int \nabla^2 \omega \cdot \nabla^2 (\nabla\times u).
$$
Taking $A>\frac{C_1}{\eta}$ large enough, together with the Poincar\'{e} inequality, we have
$$
\mathcal F(t)\geq \|(\nabla\times u, \nabla\times \omega, \nabla\times b)(t)\|_{\dot{H}^{2}}^2.
$$
Inserting \eqref{e1} into \eqref{t5}, using the Poincar\'{e} inequality, and choosing $\epsilon$ sufficiently small, we obtain
\begin{equation}\label{hs2}
\begin{aligned}
&\frac{d}{dt}\mathcal F(t) + C_4 \mathcal F(t)\leq 0,
\end{aligned}
\end{equation}
which implies
\begin{equation}\label{t6}
\begin{aligned}
\mathcal F(t) \leq C \epsilon^2 e^{-C_4t}.
\end{aligned}
\end{equation}
Then it follows from \eqref{t6} and $\nabla\cdot u=\nabla\cdot b=0$ that
\begin{equation}\label{t7}
\begin{aligned}
\|(\nabla u, \nabla b)(t)\|_{\dot{H}^{2}}^2 + \|(\nabla\times \omega)(t)\|_{\dot{H}^{2}}^2 \leq C \epsilon^2 e^{-C_4t}.
\end{aligned}
\end{equation}
Note that
$$
\|\nabla\omega\|_{\dot{H}^{2}}\leq C(\|\nabla\times \omega\|_{\dot{H}^{2}} + \|\nabla\cdot \omega\|_{\dot{H}^{2}}),
$$
so, to close the estimates, we also need to control $\|\nabla\cdot \omega\|_{\dot{H}^{2}}$. To this end, applying $\nabla^2 \nabla\cdot$ to  the second equation of \eqref{MMEZ}, and taking the $L^2$-inner product to the resultant with $\nabla^2 \nabla\cdot \omega$, we obtain
\begin{equation}\label{t8}
\begin{aligned}
&\frac 12 \frac{d}{dt} \|\nabla\cdot\omega\|_{\dot{H}^{2}}^2 + 4\chi \|\nabla\cdot\omega\|_{\dot{H}^{2}}^2 + \eta \|\nabla\cdot\omega\|_{\dot{H}^{3}}^2\\
&\leq - \int \nabla^2 \nabla\cdot (u\cdot \nabla \omega)\cdot \nabla^2 \nabla\cdot\omega\\
&= - \int \nabla^2 u\cdot \nabla \nabla\cdot\omega \cdot \nabla^2 \nabla\cdot\omega - 2 \int \nabla u\cdot \nabla (\nabla\nabla\cdot\omega) \cdot \nabla^2 \nabla\cdot\omega\\
&\ \ \ - \int \nabla u\cdot \nabla \nabla^2 \omega \cdot \nabla^2 \nabla\cdot\omega - \int \nabla^3 u\cdot \nabla \omega \cdot \nabla^2 \nabla\cdot\omega\\
&:= R_1 + R_2 + R_3 + R_4.
\end{aligned}
\end{equation}
Using integration by parts, \eqref{e1}, \eqref{t7} and the Poincar\'{e} inequality, we get
\begin{equation*}
\begin{aligned}
R_1&=\int \nabla u\cdot \nabla (\nabla\nabla\cdot\omega) \cdot \nabla^2 \nabla\cdot\omega + \int \nabla u\cdot \nabla \nabla\cdot\omega \cdot \nabla^3 \nabla\cdot\omega\\
&\leq \|\nabla u\|_{L^{\infty}} (\|\nabla (\nabla\nabla\cdot\omega)\|_{L^2}\|\nabla^2 \nabla\cdot\omega\|_{L^2} + \|\nabla \nabla\cdot\omega\|_{L^2}\|\nabla^3 \nabla\cdot\omega\|_{L^2})\\
&\leq C \epsilon \|\nabla\cdot\omega\|_{\dot{H}^{3}}^2.
\end{aligned}
\end{equation*}
Similarly,
\begin{equation*}
\begin{aligned}
R_2\leq C \epsilon \|\nabla\cdot\omega\|_{\dot{H}^{3}}^2.
\end{aligned}
\end{equation*}
Applying H\"{o}lder's inequality, \eqref{e1} and the Poincar\'{e} inequality to yield
\begin{equation*}
\begin{aligned}
R_3&\leq \|\nabla u\|_{L^{\infty}} \|\nabla \nabla^2 \omega\|_{L^2}\|\nabla^2 \nabla\cdot\omega\|_{L^2}\\
&\leq \|\nabla u\|_{L^{\infty}} (\|\nabla^2 \nabla\times\omega\|_{L^2}\|\nabla^2 \nabla\cdot\omega\|_{L^2} + \|\nabla^2 \nabla\cdot\omega\|_{L^2}^2)\\
&\leq C \epsilon^2 e^{-\frac{C_4}{2}t} + C \epsilon \|\nabla\cdot\omega\|_{\dot{H}^{3}}^2.
\end{aligned}
\end{equation*}
Using the Gagliardo-Nirenberg inequality and Young's inequality, one has
\begin{equation*}
\begin{aligned}
R_4&\leq \|\nabla^3 u\|_{L^2} \|\nabla \omega\|_{L^4}\|\nabla^2 \nabla\cdot\omega\|_{L^4}\\
&\leq \|\nabla^3 u\|_{L^2} \|\nabla \omega\|_{L^2}^{\frac14} \|\nabla^2 \omega\|_{L^2}^{\frac34} \|\nabla^2 \nabla\cdot\omega\|_{L^2}^{\frac14} \|\nabla^3 \nabla\cdot\omega\|_{L^2}^{\frac34}\\
&\leq C \epsilon^2 e^{-\frac{C_4}{2}t} + C \epsilon \|\nabla\cdot\omega\|_{\dot{H}^{3}}^2.
\end{aligned}
\end{equation*}
Inserting the above bounds into \eqref{t8}, and choosing $\epsilon$ sufficiently small, one arrives at
\begin{equation}\label{t9}
\begin{aligned}
\frac{d}{dt} \|\nabla\cdot\omega\|_{\dot{H}^{2}}^2 + 8\chi \|\nabla\cdot\omega\|_{\dot{H}^{2}}^2 + \eta \|\nabla\cdot\omega\|_{\dot{H}^{3}}^2\leq C \epsilon^2 e^{-\frac{C_4}{2}t}.
\end{aligned}
\end{equation}
Applying the Gronwall's inequality, we obtain
\begin{equation}\label{t10}
\begin{aligned}
\|\nabla\cdot\omega\|_{\dot{H}^{2}}^2\leq C \epsilon^2 e^{-C_5t}.
\end{aligned}
\end{equation}
Then \eqref{t7} and \eqref{t10} imply
\begin{equation*}
\begin{aligned}
\|(\nabla u, \nabla\omega, \nabla b)(t)\|_{\dot{H}^{2}}^2 + \|(\nabla\times \omega)(t)\|_{\dot{H}^{2}}^2 \leq C_6 \epsilon^2 e^{-C_7t}.
\end{aligned}
\end{equation*}
Inserting this estimate, \eqref{sma} and \eqref{e1} into \eqref{e}, one arrives at for all $t\in[0,T]$,
\begin{align*}
\mathcal E(t)\leq \epsilon^2 + \frac{2C_1\sqrt{C_6}}{C_7} \epsilon^3\leq 3\epsilon^2,
\end{align*}
which is achievable by taking $\epsilon$ sufficiently small such that $\frac{2C_1\sqrt{C_6}}{C_7} \epsilon\leq2$. Thus the argument is closed and we complete the proof of Theorem \ref{main1}.

\end{proof}

\vskip .3in
\section{Proof of Theorem \ref{main2}}
\vskip .1in

This section is devoted to the proof of Theorem \ref{main2}. As preparations, we first recall the following lemma about the Diophantine condition which is satisfied for almost all vectors $\alpha\in\mathbb R^3$ as demonstrated in \cite{CZZ22}.

\begin{lemma}\label{DC}
Suppose that $\alpha\in\mathbb R^3$ satisfies the Diophantine condition, that is, for any $k\in\mathbb Z^3\setminus\{0\}$, there exist constants $c>0$ and $r>2$ such that
$$
|\alpha\cdot k|\geq \frac{c}{|k|^r}.
$$
Then it holds that for any $s\in\mathbb R$,
$$
\|f\|_{H^s(\mathbb T^3)}\leq C \|\alpha\cdot \nabla f\|_{H^{s+r}(\mathbb T^3)}
$$
with $\int_{\mathbb T^3} f dx=0$.

\end{lemma}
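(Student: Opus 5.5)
The plan is to prove Lemma \ref{DC} on the Fourier side, where both the constraint $\int_{\mathbb T^3} f\,dx=0$ and the operator $\alpha\cdot\nabla$ become simple. Write $f(x)=\sum_{k\in\mathbb Z^3}\hat f(k)e^{ik\cdot x}$, normalizing $\mathbb T^3=[0,2\pi)^3$; for another period the frequencies are rescaled and only the constants change. The mean-zero assumption says exactly $\hat f(0)=0$, so only $k\neq0$ contributes. Since $\widehat{\alpha\cdot\nabla f}(k)=i(\alpha\cdot k)\hat f(k)$, Plancherel gives
\begin{equation*}
\|\alpha\cdot\nabla f\|_{H^{s+r}}^2=\sum_{k\neq0}(1+|k|^2)^{s+r}|\alpha\cdot k|^2|\hat f(k)|^2 .
\end{equation*}

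The Diophantine condition gives $|\alpha\cdot k|^2\geq c^2|k|^{-2r}$ for each $k\neq0$. I then compare weights: for $|k|\geq1$ we have $|k|^2\leq 1+|k|^2\leq 2|k|^2$. Because $r>0$, this gives $|k|^{-2r}\geq (1+|k|^2)^{-r}$. Therefore
\begin{equation*}
(1+|k|^2)^{s+r}|\alpha\cdot k|^2\geq c^2(1+|k|^2)^{s}
\end{equation*}
for every $k\neq0$. Summing against $|\hat f(k)|^2$ yields $\|f\|_{H^s}^2\leq c^{-2}\|\alpha\cdot\nabla f\|_{H^{s+r}}^2$, i.e.\ the claim with $C=1/c$. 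Nothing here depends on the sign of $s$, since the weight comparison is termwise.

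There is no real obstacle. The points that need care are the following.
\begin{enumerate}
\item The constants $c,r$ must be uniform in $k$; this is the intended reading of the hypothesis despite the quantifier order in the statement.
\item The mean-zero assumption is genuinely needed, since $\alpha\cdot k=0$ at $k=0$.
\item If the norm is taken to be homogeneous, $\sum|k|^{2s}|\hat f(k)|^2$, the same argument works directly with $|k|^{-2r}$ and no comparison step is required.
\end{enumerate}
Existence of such $\alpha$ for almost every vector is not part of the statement; it is cited from \cite{CZZ22}.
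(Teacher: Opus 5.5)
Your proof is correct. With $\hat f(0)=0$, Plancherel, the uniform bound $|\alpha\cdot k|\ge c|k|^{-r}$, and $(1+|k|^2)^{-r}\le |k|^{-2r}$ for $k\neq0$ give the inequality termwise with $C=1/c$ for every real $s$. The paper gives no proof of its own and only recalls the lemma from \cite{CZZ22}; your Fourier-multiplier argument is the standard proof of this fact, and your reading that $c$ and $r$ must be uniform in $k$, despite the quantifier order as written, is the right one.
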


Now we are ready to prove Theorem \ref{main2}.

\begin{proof}[Proof of Theorem \ref{main2}]

The framework of the proof is the bootstrapping argument. The initial data $(u_0, \omega_0, B_{0})$ is assumed to be small, in the sense that
\begin{equation*}
\|(u_{0}, \omega_{0}, B_{0})\|_{H^N}\leq \epsilon
\end{equation*}
for some sufficiently small $\varepsilon>0$. Let $(u, \omega, B)$ be the corresponding solution. We make
the ansatz that, for $t\in [0, T]$ with $T>0$,
\begin{equation}\label{g1}
\|(u, \omega, B)(t)\|_{H^N}\leq \delta
\end{equation}
for $0<\delta<1$ to be determined later. Our main efforts are then devoted to proving the improved inequality, for all $t\in [0, T]$,
\begin{equation}\label{g2}
\|(u, \omega, B)(t)\|_{H^N}\leq \frac{\delta}{2}.
\end{equation}
Then the bootstrapping argument implies $T=\infty$ and that (\ref{g2}) actually holds for any $t<\infty$.

The rest of proof is devoted to showing (\ref{g2}). The proof is slightly long.  For the sake of clarity, we divide it into three steps.

\vskip .1in
\textbf{Step I. Estimate of $\|(u, \omega, B)\|_{H^{r+5}}$.}

Taking the $L^2$-inner products with $(u, \omega, B)$ to \eqref{MMEZZP}, one has
\begin{equation}\label{L2}
\begin{aligned}
\frac{1}{2} \frac{d}{dt} \|(u, \omega, B)\|_{L^{2}}^{2} + \kappa \|\nabla\cdot \omega\|_{L^{2}}^{2} + \eta \|\nabla \omega\|_{L^{2}}^{2}\leq 0,
\end{aligned}
\end{equation}
where we have used the facts
\begin{equation*}
\begin{aligned}
&\int B\cdot \nabla B \cdot u + \int B\cdot \nabla u \cdot B = 0,\ \int \alpha\cdot \nabla B \cdot u + \int \alpha\cdot \nabla u \cdot B = 0,\\
&4\chi\int \nabla\times u\cdot \omega \leq \chi \|\nabla u\|_{L^{2}}^{2} + 4\chi \|\omega\|_{L^{2}}^{2}.
\end{aligned}
\end{equation*}
Taking $\nabla^{r+5}$ to \eqref{MMEZZP}, and then making the $L^2$-inner product to resulting equations with $(\nabla^{r+5} u, \nabla^{r+5} \omega, \nabla^{r+5} B)$ respectively, we have
\begin{equation}\label{T3}
\begin{aligned}
&\frac 12 \frac{d}{dt} \|(u, \omega, B)\|_{\dot{H}^{r+5}}^2 + \chi \|\nabla u\|_{\dot{H}^{r+5}}^2 + 4\chi \|\omega\|_{\dot{H}^{r+5}}^2 + \kappa \|\nabla\cdot \omega\|_{\dot{H}^{r+5}}^2 + \eta \|\nabla \omega\|_{\dot{H}^{r+5}}^2\\
&\leq 4\chi \int \nabla^{r+5} (\nabla\times u) \cdot \nabla^{r+5} \omega + \int \nabla^{r+5} (B\cdot \nabla B) \cdot \nabla^{r+5} u + \int \nabla^{r+5} (\alpha\cdot \nabla B) \cdot \nabla^{r+5} u\\
&\ \ \ - \int [\nabla^{r+5}, u\cdot \nabla]u \cdot \nabla^{r+5} u - \int [\nabla^{r+5}, u\cdot \nabla]\omega \cdot \nabla^{r+5} \omega + \int \nabla^{r+5} (B\cdot \nabla u) \cdot \nabla^{r+5} B\\
&\ \ \  + \int \nabla^{r+5} (\alpha\cdot \nabla u) \cdot \nabla^{r+5} B - \int [\nabla^{r+5}, u\cdot \nabla]B \cdot \nabla^{r+5} B\\
&:= L_1 + L_2 + L_3 + L_4 + L_5 + L_6 + L_7 + L_8.
\end{aligned}
\end{equation}
Applying Young's inequality to yield
\begin{align*}
L_1 \leq \chi \|\nabla u\|_{\dot{H}^{r+5}}^2 + 4\chi \|\omega\|_{\dot{H}^{r+5}}^2.
\end{align*}
Using the commutator estimate and the Sobolev embedding theorem, one finds
\begin{align*}
L_5&\leq\|[\nabla^{r+5}, u\cdot \nabla]\omega\|_{L^2} \|\nabla^{r+5} \omega\|_{L^2}\\
&\leq C(\|\nabla u\|_{L^\infty} \|\omega\|_{\dot{H}^{r+5}}^2 + \|\nabla \omega\|_{L^\infty} \|u\|_{\dot{H}^{r+5}}\|\omega\|_{\dot{H}^{r+5}})\\
&\leq C \|(\nabla u, \nabla \omega)\|_{H^{2}} \|(u, \omega)\|_{\dot{H}^{r+5}}^2.
\end{align*}
Similarly, we have
\begin{align*}
L_4\leq C \|\nabla u\|_{H^{2}} \|u\|_{\dot{H}^{r+5}}^2.
\end{align*}
\begin{align*}
L_8\leq C \|(\nabla u, \nabla B)\|_{H^{2}} \|(u, B)\|_{\dot{H}^{r+5}}^2.
\end{align*}
Note that
\begin{align*}
L_2+L_6&=\int [\nabla^{r+5}, B\cdot \nabla]B \cdot \nabla^{r+5} u + \int [\nabla^{r+5}, B\cdot \nabla]u \cdot \nabla^{r+5} B\\
&\leq \|[\nabla^{r+5}, B\cdot \nabla]B\|_{L^2} \|\nabla^{r+5} u\|_{L^2} + \|[\nabla^{r+5}, B\cdot \nabla]u\|_{L^2} \|\nabla^{r+5} B\|_{L^2},
\end{align*}
then as in the estimate of $L_5$, we obtain
\begin{align*}
L_2+L_6\leq C \|(\nabla u, \nabla B)\|_{H^{2}} \|(u, B)\|_{\dot{H}^{r+5}}^2.
\end{align*}
It is easy to see that
\begin{align*}
L_3+L_7=0.
\end{align*}
Inserting the above estimates into \eqref{T3}, we get
\begin{equation}\label{Hs1}
\begin{aligned}
&\frac 12 \frac{d}{dt} \|(u, \omega, B)\|_{\dot{H}^{r+5}}^2 + \kappa \|\nabla\cdot \omega\|_{\dot{H}^{r+5}}^2 + \eta \|\nabla \omega\|_{\dot{H}^{r+5}}^2\\
&\leq C \|(\nabla u, \nabla \omega, \nabla B)\|_{H^{2}} \|(u, \omega, B)\|_{\dot{H}^{r+5}}^2.
\end{aligned}
\end{equation}
Adding \eqref{L2} and \eqref{Hs1} up, one arrives at
\begin{equation}\label{Hs}
\begin{aligned}
&\frac{d}{dt} \|(u, \omega, B)\|_{H^{r+5}}^2 + 2\kappa \|\nabla\cdot \omega\|_{H^{r+5}}^2 + 2\eta \|\nabla \omega\|_{H^{r+5}}^2\\
&\leq C \|(u, \omega, B)\|_{H^{3}} \|(u, \omega, B)\|_{H^{r+5}}^2.
\end{aligned}
\end{equation}

\vskip .1in
\textbf{Step II. Enhanced dissipation $\|u\|_{H^{r+5}}^2 + \|\alpha\cdot\nabla B\|_{H^{r+3}}^2$.}

This crucial estimate will help us to overcome the difficulties of lack of kinematic viscosity and magnetic diffusion, and eventually makes \eqref{Hs} become a closed differential inequality. To this end, applying $\nabla^{k}$ with $1\leq k\leq r+4$ to the second equation of \eqref{MMEZZP}, multiplying the result by $\nabla^{k} (\nabla\times u)$ and integrating with respect to space domain, yields
\begin{equation}\label{T4}
\begin{aligned}
2\chi\|\nabla^{k} (\nabla\times u)\|_{L^2}^2 &= \int \nabla^{k} \omega_t \cdot \nabla^{k} (\nabla\times u) + \int \nabla^{k} (u\cdot\nabla \omega) \cdot \nabla^{k} (\nabla\times u)\\
&\ \ \ + 4\chi \int \nabla^{k} \omega \cdot \nabla^{k} (\nabla\times u) - \kappa \int \nabla^{k} (\nabla\nabla\cdot \omega) \cdot \nabla^{k} (\nabla\times u)\\
&\ \ \  - \eta \int \nabla^{k} \Delta \omega \cdot \nabla^{k} (\nabla\times u)\\
&:= I_1 + I_2 + I_3 + I_4 + I_5.
\end{aligned}
\end{equation}
To estimate $I_1$, we use the equation of $u$ to obtain that
\begin{equation*}
\begin{aligned}
I_1 &= \frac{d}{dt}\int \nabla^{k} \omega \cdot \nabla^{k} (\nabla\times u) - \int \nabla^{k} \omega \cdot \nabla^{k} (\nabla\times u_t)\\
&= \frac{d}{dt}\int \nabla^{k} \omega \cdot \nabla^{k} (\nabla\times u) - \int \nabla^{k} \omega \cdot \nabla^{k} \nabla\times (-\nabla p + 2\chi \nabla\times \omega\\
&\ \ \ + B\cdot\nabla B + \alpha\cdot\nabla B - u\cdot\nabla u + \chi \Delta u)\\
&=\frac{d}{dt}\int \nabla^{k} \omega \cdot \nabla^{k} (\nabla\times u) - 2\chi  \int \nabla^{k} \omega \cdot \nabla^{k} \nabla\times (\nabla\times \omega)\\
&\ \ \ - \int \nabla^{k} \omega \cdot \nabla^{k} \nabla\times (B\cdot\nabla B) - \int \nabla^{k} \omega \cdot \nabla^{k} \nabla\times (\alpha\cdot\nabla B)\\
&\ \ \ + \int \nabla^{k} \omega \cdot \nabla^{k} \nabla\times (u\cdot\nabla u) - \chi \int \nabla^{k} \omega \cdot \nabla^{k} \nabla\times (\Delta u)\\
&:= \frac{d}{dt}\int \nabla^{k} \omega \cdot \nabla^{k} (\nabla\times u) + I_{11} + I_{12} + I_{13} + I_{14} + I_{15}.
\end{aligned}
\end{equation*}
It is easy to see that
\begin{equation*}
\begin{aligned}
I_{11}=-2\chi\|\nabla^{k} (\nabla\times \omega)\|_{L^2}^2.
\end{aligned}
\end{equation*}
Using H\"{o}lder's inequality and the commutator estimate, together with \eqref{g1}, we get
\begin{equation*}
\begin{aligned}
I_{12}
&= - \int \nabla^{k} (\nabla\times \omega) \cdot \nabla^{k} (B\cdot\nabla B)\\
&\leq \|\nabla^{k} (\nabla\times \omega)\|_{L^2} \|\nabla^{k} (B\cdot\nabla B)\|_{L^2}\\
&\leq C \|\nabla^{k} (\nabla\times \omega)\|_{L^{2}} \|B\|_{L^\infty} \|B\|_{\dot{H}^{k+1}}\\
&\leq \chi \|\nabla^{k} (\nabla\times \omega)\|_{L^{2}}^2 + C\delta^2 \|B\|_{H^2}^2.
\end{aligned}
\end{equation*}
By integration by parts and using Young's inequality, we derive that
\begin{equation*}
\begin{aligned}
I_{13}
&= - \int \nabla^{k+2} \omega \cdot \nabla^{k-2} \nabla\times (\alpha\cdot\nabla B)\\
&\leq C\|\nabla^{k+2} \omega\|_{L^2} \|\nabla^{k-1} (\alpha\cdot\nabla B)\|_{L^2}\\
&\leq C\|\nabla \omega\|_{\dot{H}^{k+1}}^2 + \frac{2-\chi}{16} \|\alpha\cdot\nabla B\|_{\dot{H}^{k-1}}^2.
\end{aligned}
\end{equation*}
Similarly, we have
\begin{equation*}
\begin{aligned}
I_{14}
&\leq \|\nabla^{k+1} \omega\|_{L^2} \|\nabla^{k}\nabla\times (u\otimes u)\|_{L^2}\\
&\leq \frac{\chi}{16} \|\nabla^{k} (\nabla\times u)\|_{L^{2}}^2 + C\delta^2 \|\omega\|_{\dot{H}^{k+1}}^2.
\end{aligned}
\end{equation*}
\begin{equation*}
\begin{aligned}
I_{15}
\leq \frac{\chi}{16} \|\nabla^{k} (\nabla\times u)\|_{L^{2}}^2 + C \|\nabla \omega\|_{\dot{H}^{k+1}}^2.
\end{aligned}
\end{equation*}
Substituting the bounds of $I_{11}$-$I_{15}$ into $I_1$, together with the Poincar\'{e} inequality, one arrives at
\begin{equation*}
\begin{aligned}
I_{1}&\leq \frac{d}{dt}\int \nabla^{k} \omega \cdot \nabla^{k} (\nabla\times u) - \chi\|\nabla^{k} (\nabla\times \omega)\|_{L^2}^2\\
&\ \ \ + C \delta^2 \|B\|_{H^2}^2 + C \|\nabla \omega\|_{\dot{H}^{k+1}}^2 + \frac{2-\chi}{16}  \|\alpha\cdot\nabla B\|_{\dot{H}^{k-1}}^2 + \frac{\chi}{8} \|\nabla^{k} (\nabla\times u)\|_{L^{2}}^2.
\end{aligned}
\end{equation*}
Applying the commutator estimate and Young's inequality, together with \eqref{g1}, we obtain
\begin{equation*}
\begin{aligned}
I_2
&\leq C \|\nabla^{k} (u\cdot\nabla \omega)\|_{L^{2}} \|\nabla^{k} (\nabla\times u)\|_{L^2}\\
&\leq C (\|u\|_{L^\infty} \|\omega\|_{\dot{H}^{k+1}} + \|\omega\|_{L^\infty} \|u\|_{\dot{H}^{k+1}}) \|\nabla^{k} (\nabla\times u)\|_{L^2}\\
&\leq \frac{\chi}{16} \|\nabla^{k} (\nabla\times u)\|_{L^{2}}^2 + C \delta^2 \|\omega\|_{\dot{H}^{k+1}}^2 + C \delta \|\nabla^{k} (\nabla\times u)\|_{L^{2}}^2.
\end{aligned}
\end{equation*}
It follows from Young's inequality that
\begin{equation*}
\begin{aligned}
I_3\leq \frac{\chi}{16} \|\nabla^{k} (\nabla\times u)\|_{L^2}^2 + C \|\omega\|_{\dot{H}^{k}}^2.
\end{aligned}
\end{equation*}
\begin{equation*}
\begin{aligned}
I_4+I_5\leq \frac{\chi}{16} \|\nabla^{k} (\nabla\times u)\|_{L^2}^2 + C \|\nabla\omega\|_{\dot{H}^{k+1}}^2.
\end{aligned}
\end{equation*}
Substituting the above bounds into \eqref{T4}, using the Poincar\'{e} inequality and choosing $\delta$ sufficiently small, we get
\begin{equation}\label{T5}
\begin{aligned}
&\frac{3}{2}\chi\|\nabla^{k} (\nabla\times u)\|_{L^2}^2 + \chi\|\nabla^{k} (\nabla\times \omega)\|_{L^2}^2 - \frac{d}{dt}\int \nabla^{k} \omega \cdot \nabla^{k} (\nabla\times u)\\
&\leq C \delta^2 \|B\|_{H^2}^2 + C \|\nabla \omega\|_{\dot{H}^{k+1}}^2 + \frac{2-\chi}{16}  \|\alpha\cdot\nabla B\|_{\dot{H}^{k-1}}^2.
\end{aligned}
\end{equation}
As in the estimate of \eqref{T5}, we derive that
\begin{equation}\label{T6}
\begin{aligned}
&\frac{3}{2}\chi\|\nabla\times u\|_{L^2}^2 + \chi\|\nabla\times \omega\|_{L^2}^2 - \frac{d}{dt}\int \omega \cdot (\nabla\times u)\\
&\leq C \delta^2 \|B\|_{H^2}^2 + C \|\nabla \omega\|_{\dot{H}^{1}}^2 + \frac{2-\chi}{16}  \|\alpha\cdot\nabla B\|_{L^2}^2.
\end{aligned}
\end{equation}
Combining \eqref{T5} and \eqref{T6} together, and using Lemma \ref{DC}, we obtain
\begin{equation}\label{T7}
\begin{aligned}
&\frac{3}{2}\chi\|\nabla\times u\|_{H^{r+4}}^2 + \chi \|\nabla\times \omega\|_{H^{r+4}}^2 - \frac{d}{dt}\sum_{k=0}^{r+4}\int \nabla^{k} \omega \cdot \nabla^{k} (\nabla\times u)\\
&\leq \Big(C \delta^2 + \frac{2-\chi}{16}\Big) \|\alpha\cdot\nabla B\|_{H^{r+3}}^2 + C \|\nabla \omega\|_{H^{r+5}}^2.
\end{aligned}
\end{equation}

Subsequently, to close the estimate, we take $\nabla^{k}$ with $1\leq k\leq r+3$ to the first equation of \eqref{MMEZZP}, multiply the result by $\nabla^{k} (\alpha\cdot\nabla B)$ and integrate the resultant over $\mathbb T^3$ to obtain that
\begin{equation}\label{T8}
\begin{aligned}
\|\nabla^{k} (\alpha\cdot\nabla B)\|_{L^2}^2 &= \int \nabla^{k} u_t \cdot \nabla^{k} (\alpha\cdot\nabla B) + \int \nabla^{k} (u\cdot\nabla u) \cdot \nabla^{k} (\alpha\cdot\nabla B)\\
&\ \ \ - \chi \int \nabla^{k} (\Delta u) \cdot \nabla^{k} (\alpha\cdot\nabla B) - 2\chi \int \nabla^{k} (\nabla\times\omega) \cdot \nabla^{k} (\alpha\cdot\nabla B)\\
&\ \ \ - \int \nabla^{k} (B\cdot\nabla B) \cdot \nabla^{k} (\alpha\cdot\nabla B) \\
&:= M_1 + M_2 + M_3 + M_4 + M_5.
\end{aligned}
\end{equation}
As in the estimate of $I_1$, we use the special structure of the equation for $B$ in \eqref{MMEZZP} and make the
substitution $\partial_t B = B\cdot \nabla u + \alpha\cdot \nabla u - u \cdot \nabla B$. Then $M_1$ can be written as
\begin{equation*}
\begin{aligned}
M_1 &= \frac{d}{dt}\int \nabla^{k} u \cdot \nabla^{k} (\alpha\cdot\nabla B) - \int \nabla^{k} u \cdot \nabla^{k} (\alpha\cdot\nabla B_t)\\
&=\frac{d}{dt}\int \nabla^{k} u \cdot \nabla^{k} (\alpha\cdot\nabla B) + \int \nabla^{k} u \cdot \nabla^{k} (\alpha\cdot\nabla (B\cdot\nabla u))\\
&\ \ \ + \int \nabla^{k} u \cdot \nabla^{k} (\alpha\cdot\nabla (\alpha\cdot\nabla u)) - \int \nabla^{k} u \cdot \nabla^{k} (\alpha\cdot\nabla (u\cdot\nabla B))\\
&:= \frac{d}{dt}\int \nabla^{k} u \cdot \nabla^{k} (\alpha\cdot\nabla B) + M_{11} + M_{12} + M_{13}.
\end{aligned}
\end{equation*}
By integration by parts, using the commutator estimate, \eqref{g1} and the Poincar\'{e} inequality, one gets
\begin{equation*}
\begin{aligned}
M_{11}
&\leq \|\nabla^{k+2} u\|_{L^2} \|\nabla^{k-2} (\alpha\cdot\nabla (B\cdot\nabla u))\|_{L^2}\\
&\leq C \|\nabla^{k+2} u\|_{L^2} (\|\nabla^{k-1} B\|_{L^\infty} \|\nabla u\|_{L^2} + \|B\|_{L^\infty} \|\nabla^{k} u\|_{L^2})\\
&\leq C \delta \|\nabla u\|_{\dot{H}^{k+1}}^2.
\end{aligned}
\end{equation*}
Similarly, we have
\begin{equation*}
\begin{aligned}
M_{13}\leq C \delta \|\nabla u\|_{\dot{H}^{k+1}}^2.
\end{aligned}
\end{equation*}
Applying $\nabla\cdot u=0$ to yield
\begin{equation*}
\begin{aligned}
M_{12}\leq |\alpha|^2 \|\nabla u\|_{\dot{H}^{k}}^2.
\end{aligned}
\end{equation*}
Thus we obtain
\begin{equation*}
\begin{aligned}
M_1\leq \frac{d}{dt}\int \nabla^{k} u \cdot \nabla^{k} (\alpha\cdot\nabla B) + C \delta \|\nabla u\|_{\dot{H}^{k+1}}^2 + |\alpha|^2 \|\nabla u\|_{\dot{H}^{k}}^2.
\end{aligned}
\end{equation*}
As in the estimate of $I_2$,
\begin{equation*}
\begin{aligned}
M_2&\leq C \|\nabla^{k} (u\cdot\nabla u)\|_{L^{2}} \|\nabla^{k} (\alpha\cdot\nabla B)\|_{L^2}\\
&\leq C \|u\|_{L^\infty} \|u\|_{\dot{H}^{k+1}} \|\nabla^{k} (\alpha\cdot\nabla B)\|_{L^2}\\
&\leq \frac{2-\chi}{16} \|\nabla^{k} (\alpha\cdot\nabla B)\|_{L^2}^2 + C \delta^2 \|\nabla u\|_{\dot{H}^{k+1}}^2.
\end{aligned}
\end{equation*}
Similarly, we have
\begin{equation*}
\begin{aligned}
M_5\leq \frac{2-\chi}{16} \|\nabla^{k} (\alpha\cdot\nabla B)\|_{L^2}^2 + C \delta^2 \|B\|_{H^2}^2.
\end{aligned}
\end{equation*}
By Young's inequality,
\begin{equation*}
\begin{aligned}
M_3\leq \frac{\chi}{2} \|\nabla u\|_{\dot{H}^{k+1}}^2 + \frac{\chi}{2} \|\nabla^{k} (\alpha\cdot\nabla B)\|_{L^2}^2.
\end{aligned}
\end{equation*}
\begin{equation*}
\begin{aligned}
M_4\leq \frac{2-\chi}{16} \|\nabla^{k} (\alpha\cdot\nabla B)\|_{L^2}^2 + C \|\nabla\omega\|_{\dot{H}^{k}}^2.
\end{aligned}
\end{equation*}
Substituting the above bounds into \eqref{T8},  one arrives at
\begin{equation}\label{T9}
\begin{aligned}
&\frac{2-\chi}{4} \|\nabla^{k} (\alpha\cdot\nabla B)\|_{L^2}^2 - \frac{d}{dt}\int \nabla^{k} u \cdot \nabla^{k} (\alpha\cdot\nabla B)\\
&\leq (\frac{\chi}{2} + C \delta^2) \|\nabla u\|_{\dot{H}^{k+1}}^2 + |\alpha|^2 \|\nabla u\|_{\dot{H}^{k}}^2 + C \|\nabla\omega\|_{\dot{H}^{k}}^2 + C \delta^2 \|B\|_{H^2}^2.
\end{aligned}
\end{equation}
As in the estimate of \eqref{T9},
\begin{equation}\label{T10}
\begin{aligned}
&\frac{2-\chi}{4} \|\alpha\cdot\nabla B\|_{L^2}^2 - \frac{d}{dt}\int u \cdot (\alpha\cdot\nabla B)\\
&\leq C \delta \|\nabla u\|_{\dot{H}^{1}}^2 + |\alpha|^2 \|\nabla u\|_{L^2}^2 + C \|\nabla\omega\|_{L^2}^2 + C \delta^2 \|B\|_{H^2}^2.
\end{aligned}
\end{equation}
Adding \eqref{T9} and \eqref{T10} up, together with Lemma \ref{DC}, we obtain
\begin{equation}\label{T11}
\begin{aligned}
&\frac{2-\chi}{4} \|\alpha\cdot\nabla B\|_{H^{r+3}}^2 - \frac{d}{dt}\sum_{k=0}^{r+3}\int \nabla^{k} u \cdot \nabla^{k} (\alpha\cdot\nabla B)\\
&\leq (\frac{\chi}{2} + C \delta^2) \|\nabla u\|_{H^{r+4}}^2  + |\alpha|^2 \|\nabla u\|_{H^{r+3}}^2 + C \|\nabla\omega\|_{H^{r+3}}^2 + C \delta^2 \|B\|_{H^{2}}^2\\
&\leq (\frac{\chi}{2} + C \delta^2 + |\alpha|^2) \|\nabla u\|_{H^{r+4}}^2 + C \|\nabla\omega\|_{H^{r+5}}^2 + C \delta^2 \|\alpha\cdot\nabla B\|_{H^{r+3}}^2.
\end{aligned}
\end{equation}

By choosing $\delta$ sufficiently small, and noting that $|\alpha|^2<\chi<2$, then \eqref{T7} and \eqref{T11} imply that there exists $C_0>0$ such that
\begin{equation}\label{T12}
\begin{aligned}
&C_0 (\|u\|_{H^{r+5}}^2 + \|\alpha\cdot\nabla B\|_{H^{r+3}}^2) - \frac{d}{dt}\sum_{k=0}^{r+4}\int \nabla^{k} \omega \cdot \nabla^{k} (\nabla\times u)\\
&\ \ \  - \frac{d}{dt}\sum_{k=0}^{r+3}\int \nabla^{k} u \cdot \nabla^{k} (\alpha\cdot\nabla B)\\
&\leq C_1 \|\nabla\omega\|_{H^{r+5}}^2.
\end{aligned}
\end{equation}

\vskip .1in
\textbf{Step III. Completion of the bootstrapping argument.}

First of all, it follows from \eqref{Hs}, \eqref{T12} and \eqref{g1} that
\begin{equation}\label{Hs2}
\begin{aligned}
&\frac{d}{dt} \Big(\gamma \|(u, \omega, B)\|_{H^{r+5}}^2 - \sum_{k=0}^{r+4}\int \nabla^{k} \omega \cdot \nabla^{k} (\nabla\times u)  - \sum_{k=0}^{r+3}\int \nabla^{k} u \cdot \nabla^{k} (\alpha\cdot\nabla B) \Big)\\
&\ \ \ + (2\eta \gamma - C_1) \|\nabla \omega\|_{H^{r+5}}^2 + C_0 (\|u\|_{H^{r+5}}^2 + \|\alpha\cdot\nabla B\|_{H^{r+3}}^2)\\
&\leq C \gamma \|(u, \omega, B)\|_{H^{3}} \|(u, \omega, B)\|_{H^{r+5}}^2\\
&\leq C \gamma \delta \|(u, \omega)\|_{H^{r+5}}^2 +  C \gamma \|(u, \omega, B)\|_{H^{3}} \|B\|_{H^{r+5}}^2,
\end{aligned}
\end{equation}
where $\gamma>1$ is to be determined later.

Using Poincar\'{e}'s inequality and Lemma \ref{DC},
$$
\|u\|_{H^{3}}\leq C \|u\|_{H^{r+5}},\ \|\omega\|_{H^{3}}\leq C \|\nabla \omega\|_{H^{r+5}}, \ \|B\|_{H^{3}}\leq C \|\alpha\cdot\nabla B\|_{H^{r+3}}.
$$
Applying the Gagliardo-Nirenberg inequality, together with Lemma \ref{DC} and \eqref{g1}, we obtain
$$
\|B\|_{H^{r+5}}\leq C \|B\|_{H^{3}}^{\frac12} \|B\|_{H^{N}}^{\frac12} \leq C \delta^{\frac12} \|\alpha\cdot\nabla B\|_{H^{r+3}}^{\frac12}
$$
as long as $N\geq 2r+7$.

Therefore,
$$
\|(u, \omega, B)\|_{H^{3}} \|B\|_{H^{r+5}}^2 \leq C \delta(\|u\|_{H^{r+5}}^2 + \|\alpha\cdot\nabla B\|_{H^{r+3}}^2 + \|\nabla \omega\|_{H^{r+5}}^2).
$$
Then by taking $\gamma>\max\Big\{1, \frac{C_1}{\eta}\Big\}$ and $\delta<\min\Big\{\frac{\eta}{C\gamma}, \frac{C_0}{2C\gamma}\Big\}$ in \eqref{Hs2}, we get
\begin{equation}\label{Hs3}
\begin{aligned}
&\frac{d}{dt} E(t) + D(t)\leq 0,
\end{aligned}
\end{equation}
where
\begin{equation*}
\begin{aligned}
&E(t)=\gamma \|(u, \omega, B)(t)\|_{H^{r+5}}^2 - \sum_{k=0}^{r+4}\int \nabla^{k} \omega \cdot \nabla^{k} (\nabla\times u)  - \sum_{k=0}^{r+3}\int \nabla^{k} u \cdot \nabla^{k} (\alpha\cdot\nabla B),\\
&D(t)=(\gamma-1) \eta \|\nabla \omega(t)\|_{H^{r+5}}^2 + \frac{C_0}{2} (\|u(t)\|_{H^{r+5}}^2 + \|(\alpha\cdot\nabla B)(t)\|_{H^{r+3}}^2).
\end{aligned}
\end{equation*}
Clearly, we can take $\gamma>1$ such that
$$
E(t)\geq \|(u, \omega, B)(t)\|_{H^{r+5}}^2.
$$
In addition, if we take $N\geq 4r+11$, we obtain by the Gagliardo-Nirenberg inequality and Lemma \ref{DC} that
$$
\|B\|_{H^{r+5}}\leq C \|B\|_{H^{3}}^{\frac34} \|B\|_{H^{N}}^{\frac14} \leq C  \|\alpha\cdot\nabla B\|_{H^{r+3}}^{\frac34} \|B\|_{H^{N}}^{\frac14}
$$
which together with \eqref{g1} and the Poincar\'{e} inequality gives
$$
E(t)\leq C \delta^{\frac12} D(t)^{\frac34}.
$$
Then it follows from this and \eqref{Hs3} that
\begin{equation*}
\begin{aligned}
&\frac{d}{dt} E(t) + C_2 E(t)^{\frac43}\leq 0.
\end{aligned}
\end{equation*}
This immediately yields
\begin{equation}\label{DE}
\begin{aligned}
E(t)\leq C(1+t)^{-3}.
\end{aligned}
\end{equation}

As in the estimate of \eqref{Hs}, we have
\begin{equation*}
\begin{aligned}
&\frac{d}{dt} \|(u, \omega, B)\|_{H^{N}}^2 + 2\eta \|\nabla \omega\|_{H^{N}}^2\\
&\leq C \|(u, \omega, B)\|_{H^{3}} \|(u, \omega, B)\|_{H^{N}}^2.
\end{aligned}
\end{equation*}
Using Gronwall's inequality and \eqref{DE}, we derive that
\begin{equation*}
\begin{aligned}
\|(u, \omega, B)(t)\|_{H^{N}}^2
&\leq \|(u_0, \omega_0, B_0)\|_{H^{N}}^2 e^{C\int_0^t \|(u, \omega, B)(\tau)\|_{H^{3}} d\tau}\\
&\leq C\epsilon^{2} e^{C\int_0^t (1+\tau)^{-\frac32} d\tau}\\
&\leq C\epsilon^{2}.
\end{aligned}
\end{equation*}
By taking $\epsilon$ small enough such that $C \epsilon^{2}<\frac{\delta}{2}$, then the above inequality implies \eqref{g2} for all $t\in[0, T]$. Then the bootstrapping argument implies that $T=\infty$ and \eqref{g2} holds for all $t<\infty$. Thus the proof of Theorem \ref{main2} is completed.

\end{proof}

\vskip .3in
\section*{Acknowledgements}
\vskip .1in

This work was supported by National Natural Science Foundation of China (No. 12371232).





\vskip .2in
\begin{thebibliography}{100}


\bibitem{AZ17} H. Abidi, P. Zhang, On the global solution of a 3-D MHD system with initial data near equilibrium, Comm. Pure Appl. Math. 70 (2017), 1509-1561.

\bibitem{AS74} G. Ahmadi, M. Shahinpoor, Universal stability of magneto-micropolar fuid motions, Int. J. Eng. Sci. 12 (1974), 657-663.

\bibitem{BSS88} C. Bardos, C. Sulem, P.-L. Sulem, Longtime dynamics of a conductive fluid in the presence of a strong magnetic field, Trans. Amer. Math. Soc. 305 (1988), 175-191.

\bibitem{BCFZ19} P. Braz e Silva, F.W. Cruz, L.B.S. Freitas, P.R. Zingano, On the $L^2$ decay of weak solutions for the 3D asymmetric fluids equations, J. Differ. Equ. 267 (2019), 3578-3609.

\bibitem{CL18} Y. Cai, Z. Lei, Global well-posedness of the incompressible magnetohydrodynamics, Arch. Ration. Mech. Anal. 3 (2018), 969-993.

\bibitem{CW11} C. Cao, J. Wu, Global regularity for the 2D MHD equations with mixed partial dissipation and magnetic diffusion, Adv. Math. 226 (2011), 1803-1822.

\bibitem{CM12} Q. Chen, C. Miao, Global well-posedness for the micropolar fluid system in critical Besov spaces, J. Differ. Equ. 252 (2012), 2698-2724.

\bibitem{CZZ22} W. Chen, Z. Zhang, and J. Zhou, Global well-posedness for the 3-D MHD equations with partial diffusion in the periodic domain, Sci. China Math. 65 (2022), 309-318.

\bibitem{DLW17} B. Dong, J. Li, J. Wu, Global well-posedness and large-time decay for the 2D micropolar equations, J. Differ. Equ. 262 (2017), 3488-3523.

\bibitem{DZ10} B. Dong, Z. Zhang, Global regularity of the 2D micropolar fluid flows with zero angular viscosity, J. Differ. Equ. 249 (2010), 200-213.

\bibitem{Eringen66} A. Eringen, Theory of micropolar fluids, J. Math. Mech. 16 (1966), 1-18.

\bibitem{Ferrari83} C. Ferrari, R. Gilbert, On lubrication with structured fluids, Appl. Anal. 15 (1983), 127-146.

\bibitem{GMNPZ25} R.H. Guterres, W.G. Melo, C.J. Niche, C.F. Perusato, P.R. Zingano, Strong alignment of micro-rotation and vorticity in 3D micropolar flows, Nonlinearity 38 (2025), No. 015006.

\bibitem{HXY18} L. He, L. Xu, P. Yu, On global dynamics of three dimensional magnetohydrodynamics: nonlinear stability of Alfv\'{e}n waves, Ann. PDE 4 (2018), No. 5.

\bibitem{KPV91} C.E. Kenig, G. Ponce, L. Vega, Well-posedness of the initial value problem for the Korteweg-de Vries equation,  J.  Amer. Math. Soc. 4 (1991), 323-347.

\bibitem{LZ09} Z. Lei, Y. Zhou, BKM's criterion and global weak solutions for magnetohydrodynamics with zero viscosity, Discrete Contin. Dyn. Syst. 25 (2009), 575-583.

\bibitem{LXZ15} F. Lin, L. Xu, P. Zhang, Global small solutions of 2-D incompressible MHD system, J. Differ. Equ. 259 (2015), 5440-5485.

\bibitem{LZ14} F. Lin, P. Zhang, Global small solutions to an MHD-type system: the three-dimensional case, Comm. Pure Appl. Math. 67 (2014), 531-580.

\bibitem{Lukaszewicz99} G. ${\L}$ukaszewicz, Micropolar fluids. Theory and applications. Modeling and Simulation in Science, Engineering and Technology. Birkh\"{a}user Boston, Inc., Boston, MA, 1999.

\bibitem{LS18} M. Li, H. Shang, Large time decay of solutions for the 3D magneto-micropolar equations, Nonlinear Anal. Real World Appl. 44 (2018), 479-496.

\bibitem{NS24} D. Niu, H. Shang, Lower and upper bounds of decay to the d-dimensional magneto-micropolar equations, J. Math. Phys. 65 (2024), No. 121501.

\bibitem{PZZ18} R. Pan, Y. Zhou, Y. Zhu, Global classical solutions of three dimensional viscous MHD system without magnetic diffusion on periodic boxes, Arch. Ration. Mech. Anal. 227 (2018), 637-662.

\bibitem{RWXZ14} X. Ren, J. Wu, Z. Xiang, Z. Zhang, Global existence and decay of smooth solution for the 2-D MHD equations without magnetic diffusion, J. Funct. Anal. 267 (2014), 503-541.

\bibitem{RB98} M. Rojas-Medar, J. Boldrini, Magneto-micropolar fluid motion: existence of weak solutions, Rev. Mat. Complut.  11  (1998),  443-460.

\bibitem{SD85} V. Sastry, T. Das, Stability of Couette fow and Dean fow in micropolar fluids, Int. J. Engin. Sci. 23 (1985), 1163-1177.

\bibitem{SL25} H. Shang, C. Liu, Global well-posedness and large time behavior for the 3D anisotropic micropolar equations, J. Differ. Equ. 421 (2025), 531-557.

\bibitem{SS26} H. Shang, W. Song, Stability and exponential decay of micropolar equations with zero kinematic viscosity, Appl. Math. Lett. 177 (2026), No. 109902.

\bibitem{SW21}  H. Shang, J. Wu, Global regularity for 2D fractional magneto-micropolar equations, Math. Z. 297 (2021), 775-802.

\bibitem{SZ17}  H. Shang, J. Zhao, Global regularity for 2D magneto-micropolar equations with only micro-rotational velocity dissipation and magnetic diffusion, Nonlinear Anal. 150 (2017), 194-209.

\bibitem{TWZ19} Z. Tan, W. Wu, J. Zhou, Global existence and decay estimate of solutions to magneto-micropolar fluid equations, J. Differ. Equ. 266 (2019), 4137-4169.

\bibitem{WZ17} D. Wei, Z. Zhang, Global well-posedness of the MHD equations in a homogeneous magnetic field, Anal. PDE 10 (2017), 1361-1406.

\bibitem{WZ20} D. Wei, Z. Zhang, Global well-posedness for the 2-D MHD equations with magnetic diffusion, Commun. Math. Res. 36 (2020), 377-389.

\bibitem{WZ23} W. Wu, Y. Zhang, Non-uniform decay of solutions to the incompressible magneto-micropolar fluids with/without magnetic diffusion and spin viscosity, J. Math. Phys. 64 (2023), No. 111502.

\bibitem{XJL24} Y. Xie, Q. Jiu, J. Liu, Sharp decay estimates and asymptotic stability for incompressible MHD equations without viscosity or magnetic diffusion, Calc. Var. Partial Differ. Equ. 63 (2024), No. 191.

\bibitem{Xue11} L. Xue, Wellposedness and zero microrotation viscosity limit of the 2D micropolar fluid equations, Math. Methods Appl. Sci. 34 (2011), 1760-1777.

\bibitem{Yamazaki15} K. Yamazaki, Global regularity of the two-dimensional magneto-micropolar fluid system with zero angular viscosity, Discrete Contin. Dyn. Syst. 35 (2015), 2193-2207.

\bibitem{YY22} W. Ye, Z. Yin, Global well-posedness for the non-viscous MHD equations with magnetic diffusion in critical Besov spaces, Acta Math. Sin. (Engl. Ser.) 38 (2022), 1493-1511.

\bibitem{YQ18} B. Yuan, Y. Qiao, Global regularity for the 2D magneto-micropolar equations with partial and fractional dissipation, Comput. Math. Appl. 76 (2018), 2345-2359.

\bibitem{Zhai23} X. Zhai, Stability for the 2D incompressible MHD equations with only magnetic diffusion, J. Differ. Equ. 374 (2023), 267-278.

\bibitem{Zhang16} T. Zhang, Global solutions to the 2D viscous, non-resistive MHD system with large background magnetic field, J. Differ. Equ. 260 (2016), 5450-5480.

\bibitem{ZZ23} Y. Zheng, Y. Zhu, Stability of 2D inviscid MHD equations with only vertical magnetic diffusion on $\mathbb T^2$, J. Math. Phys. 64 (2023), No. 111508.

\bibitem{ZZ18} Y. Zhou, Y. Zhu, Global classical solutions of 2D MHD system with only magnetic diffusion on periodic domain, J. Math. Phys. 59 (2018), No. 81505.





\end{thebibliography}
\end{document}